\newtheorem{theorem}{Theorem}
\newtheorem{definition}[theorem]{Definition}
\newtheorem{remark}[theorem]{Remark}
\newtheorem{proposition}[theorem]{Proposition}
\newtheorem{lemma}[theorem]{Lemma}
\newcommand{\R}{\mathbb{R}}
\newcommand{\Mrom}{\mathrm{M}}
\newcommand{\SSS}{\mathbb{S}}
\newcommand{\N}{\mathbb{N}}
\newcommand{\Z}{\mathbb{Z}}
\newcommand{\fbold}{\boldsymbol{f}}
\newcommand{\fboldstar}{{\boldsymbol{f}_{\!\star}}}
\newcommand{\fboldbot}{\boldsymbol{f}_{\!\bot}}
\newcommand{\gbold}{\boldsymbol{g}}
\newcommand{\hbold}{\boldsymbol{h}}
\newcommand{\mbold}{\boldsymbol{m}}
\newcommand{\ubold}{\boldsymbol{u}}
\newcommand{\obold}{\boldsymbol{0}}
\newcommand{\alphabold}{\boldsymbol{\alpha}}
\newcommand{\Fhat}{\hat{F}}
\newcommand{\Ghat}{\hat{G}}
\newcommand{\Hhat}{\hat{H}}
\newcommand{\QuantumN}{\mathrm{N}}
\newcommand{\SO}{\mathrm{SO}}
\newcommand{\norm}[1]{\left\lVert #1\right\rVert}
\newcommand{\abs}[1]{\left\lvert#1\right\rvert}
\newcommand{\Round}[1]{\left(#1\right)}
\title[A sharpened energy-Strichartz inequality]{A sharpened energy-Strichartz inequality for the wave equation}
\author{Giuseppe Negro}
\begin{document}
\begin{abstract}
    We consider the sharp Strichartz estimate for the wave equation on $\mathbb R^{1+5}$ in the energy space, due to Bez and Rogers. We show that it can be refined by adding a term proportional to the distance from the set of maximisers, in the spirit of the classical sharpened Sobolev estimate of Bianchi and Egnell.
\end{abstract}

\maketitle

\section{Introduction}
Let $\mathcal{H}$ denote the \emph{energy space} for the wave equation; precisely, $\mathcal H$ is the real Hilbert space obtained as the completion of the Schwartz space with scalar product 
\begin{equation}\label{eq:PhaseFive_scalprod}
	\Braket{\fbold|\gbold}_{\mathcal{H}}:= \int_{\R^5} \nabla f_0\cdot \nabla g_0\, dx + \int_{\R^5} f_1 g_1\, dx,
\end{equation}
where $\fbold=(f_0, f_1), \gbold=(g_0, g_1)$. Bez and Rogers \cite{BezRogers13} proved the following sharp inequality: for all $u\colon \R^{1+5}\to \R$ satisfying $u_{tt}=\Delta u$, letting $\ubold(0)=(u(0), u_t(0))$, it holds that
\begin{equation}\label{eq:BezRogers}
	\norm{u}_{L^4(\R^{1+5})}^2\le \frac{1}{8\pi}\norm{\ubold(0)}_{\mathcal{H}}^2.
\end{equation}
Moreover, there is equality in \eqref{eq:BezRogers} if and only if
\begin{equation}\label{eq:MromFive_Manifold}
	\begin{split} 
	\ubold(0)\in\Mrom :=\Set{ c\Gamma_{\!\alphabold} \fboldstar |c\in\R,\  \alphabold\in
		\SSS^1 \times \R^7 \times \SO(5)
		},
	\end{split}
\end{equation}
where $\fboldstar:=( 4 (1+\abs{\cdot}^2)^{-2}, 0)$, and $\Gamma_{\!\alphabold}$ is a certain representation of the natural symmetry group of~\eqref{eq:BezRogers}; we will define this operator in the following section. The distance from $\Mrom$ is
\begin{equation}\label{eq:distance_fived}
	\mathrm{dist}(\fbold,\Mrom):= \inf \Set{ \norm{\fbold-c\Gamma_{\!\alphabold} \fboldstar}_{\mathcal{H}} |   c\in\R,\  \alphabold\in
		\SSS^1 \times \R^7 \times \SO(5) }.
\end{equation}
In this note we will prove that~\eqref{eq:BezRogers} can be sharpened, by adding a term that is proportional to such distance.
\begin{theorem}\label{thm:main_fived}
	There exists an absolute constant $C>0$ such that, for all $u\colon\R^{1+5}\to \R$ satisfying $u_{tt}=\Delta u$ and $\ubold(0)\in \mathcal{H}$, it holds that
  \begin{equation}\label{eq:main_fived}
  	C\mathrm{dist}(\ubold(0), \Mrom)^2\le \frac{1}{8\pi}\norm{\ubold(0)}_{\mathcal{H}}^2-\norm{u}_{L^4(\R^{1+5})}^2 \le   \frac{1}{8\pi}\mathrm{dist}(\ubold(0), \Mrom)^2.
\end{equation} 
\end{theorem}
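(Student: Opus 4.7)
I would prove the two inequalities in~\eqref{eq:main_fived} separately, writing $u_\fbold$ for the solution to $u_{tt}=\Delta u$ with initial data $\fbold\in\mathcal{H}$. The upper bound follows softly from the Euler--Lagrange equation at $\fboldstar$. Optimising out the amplitude $c\in\R$ in~\eqref{eq:distance_fived} yields
\begin{equation*}
	\tfrac{1}{8\pi}\Bigl(\norm{\ubold(0)}_\mathcal{H}^2-\mathrm{dist}(\ubold(0),\Mrom)^2\Bigr) = \tfrac{1}{8\pi}\sup_\alphabold \frac{\Braket{\ubold(0)|\Gamma_{\!\alphabold}\fboldstar}_\mathcal{H}^2}{\norm{\fboldstar}_\mathcal{H}^2},
\end{equation*}
so it is enough to bound each supremand by $\norm{u}_{L^4}^2$. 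Differentiating $\norm{u_\fbold}_{L^4}^2/\norm{\fbold}_\mathcal{H}^2$ at the maximiser $\fboldstar$ gives the identity $\int_{\R^{1+5}} u_\fboldstar^3\, u_\fbold\, dt\,dx=(8\pi)^{-1}\norm{u_\fboldstar}_{L^4}^2\Braket{\fboldstar|\fbold}_\mathcal{H}$ for every $\fbold\in\mathcal{H}$. Applying Hölder to the integral and using $\norm{u_\fboldstar}_{L^4}^2=(8\pi)^{-1}\norm{\fboldstar}_\mathcal{H}^2$ produces $\Braket{\ubold(0)|\fboldstar}_\mathcal{H}^2\le 8\pi\norm{\fboldstar}_\mathcal{H}^2\norm{u}_{L^4}^2$; the symmetry $\Gamma_{\!\alphabold}$ then transfers the bound to every element of $\Mrom$, completing the upper estimate.

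For the lower bound I would argue by contradiction and compactness. Assume a sequence $\ubold_n\in\mathcal{H}$ with $\Phi(\ubold_n):=\tfrac{1}{8\pi}\norm{\ubold_n}_\mathcal{H}^2-\norm{u_n}_{L^4}^2$ and $\Phi(\ubold_n)/\mathrm{dist}(\ubold_n,\Mrom)^2\to 0$. By $2$-homogeneity and the symmetry group I normalise $\mathrm{dist}(\ubold_n,\Mrom)=1$ and decompose $\ubold_n=c_n\fboldstar+\eta_n$, with $c_n\fboldstar$ an approximate projection onto $\Mrom$ and $\eta_n$ orthogonal in $\mathcal{H}$ to the tangent space $T_\fboldstar\Mrom$. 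To rule out loss of compactness I would invoke the Bahouri--Gérard profile decomposition in the energy space: the profiles are asymptotically orthogonal in $\mathcal{H}$ and in $L^4_{t,x}$, and the strict concavity of the square root acting on the $L^4$-norms then forces any two active profiles to contribute a strictly positive amount to $\Phi$. Since $\Phi(\ubold_n)\to 0$, only one profile can survive; \eqref{eq:BezRogers} identifies it as an element of $\Mrom$. A rescaling argument dealing with $c_n\to\infty$ then reduces the problem to $c_n\to c_\infty>0$ together with $\norm{\eta_n}_\mathcal{H}\to 0$.

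In this local regime I would perform a quadratic expansion of $\Phi$ around $c_\infty\fboldstar$. The first-order term in $\eta_n$ vanishes thanks to the Euler--Lagrange identity and the orthogonality $\eta_n\perp\fboldstar$, leaving a quadratic form of the schematic shape
\begin{equation*}
	Q(\eta):=\tfrac{1}{8\pi}\norm{\eta}_\mathcal{H}^2 - \frac{3}{\norm{u_\fboldstar}_{L^4}^2}\int_{\R^{1+5}} u_\fboldstar^2\, u_\eta^2\, dt\, dx.
\end{equation*}
The \emph{main obstacle} is to show that $Q$ is bounded below by a positive multiple of $\norm{\eta}_\mathcal{H}^2$ on the $\mathcal{H}$-orthogonal complement of $T_\fboldstar\Mrom$. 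I would attack this by exploiting the conformal invariance of the wave equation in dimension $5$: pulling back to the Einstein cylinder $\SSS^5\times\R$ turns $u_\fboldstar$ into a constant and transforms $Q$ into a compact perturbation of the identity on a Hilbert space diagonalised by spherical harmonics on $\SSS^5$ and time-frequency modes. One then verifies by inspection that the kernel of $Q$ is spanned by the infinitesimal generators of the symmetry group at $\fboldstar$---precisely $T_\fboldstar\Mrom$---and that the next eigenvalue is strictly positive. Feeding this spectral gap back into the expansion contradicts $\Phi(\ubold_n)/\norm{\eta_n}_\mathcal{H}^2\to 0$ and closes the proof.
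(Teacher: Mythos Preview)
Your overall architecture matches the paper's: the upper bound via the Euler--Lagrange identity and H\"older is exactly the abstract argument the paper invokes from~\cite[Proposition~2.1]{Ne18}, and the reduction of the lower bound to a local spectral gap for the Hessian $Q$, followed by a Bahouri--G\'erard globalisation, is the paper's route as well.

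The gap lies in your proposed resolution of what you correctly flag as the main obstacle. You assert that after pulling back to the Einstein cylinder, $Q$ becomes ``a compact perturbation of the identity on a Hilbert space diagonalised by spherical harmonics,'' so that the spectral gap can be ``verified by inspection.'' This is precisely what \emph{fails} here and what distinguishes the energy-Strichartz case from the conformal case of~\cite{Ne18}. After the Penrose transform, neither the scalar product $\Braket{\cdot|\cdot}_{\mathcal H}$ nor the quadratic form $Q$ is diagonal in the spherical-harmonic basis: both pick up nearest-neighbour couplings in the degree $\ell$, so that $Q$ has a genuinely \emph{tridiagonal} structure (see~\eqref{eq:infinite_tridiagonal} and Remarks~\ref{rem:cumbersome_because_not_conformal}, \ref{rem:ConformalDiagonalQuadratic}, \ref{rem:cumbersome_orthogonality}). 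The off-diagonal coefficients $\beta_{\ell,\mbold}$ in~\eqref{eq:alpha_beta} do not decay relative to the diagonal ones, so $Q$ is not a compact perturbation of a multiple of the identity in this basis, the eigenvalues cannot be read off, and the orthogonality $\eta\perp T_{\fboldstar}\Mrom$ is itself awkward to express in these coordinates.

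The paper supplies two ingredients you are missing. First, Lemma~\ref{lem:change_ortho}: since $Q\ge 0$ with null space $T_{\fboldstar}\Mrom$, Cauchy--Schwarz lets one replace the $\mathcal H$-orthogonality by any convenient orthogonality relation without degrading the coercivity constant; this allows one to impose the clean condition~\eqref{eq:TildeBot} instead. Second, Lemma~\ref{lem:tridiag_dominance} gives a diagonal-dominance criterion for positivity of infinite tridiagonal forms, which is then checked by hand for $Q-\tfrac{36}{85}\tfrac{1}{8\pi}\lVert\cdot\rVert_{\mathcal H}^2$ using the explicit coefficients~\eqref{eq:alpha_beta}. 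Your sketch would go through in the conformal setting of~\cite{Ne18}, but here the absence of Lorentz-boost invariance of~\eqref{eq:BezRogers} both shrinks $T_{\fboldstar}\Mrom$ (Remark~\ref{rem:smaller_tangspace}) and destroys the diagonalisation you rely on.
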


This theorem is analogous to~\cite[Theorem~1.1]{Ne18}, which is a sharpening of the conformal Strichartz estimate of Foschi~\cite{Foschi}, based on the computation of a spectral gap using the Penrose conformal compactification of the Minkowski spacetime. In applying the same approach here, we will face a fundamental new difficulty, related to the lack of conformal invariance of~\eqref{eq:BezRogers}. The computation of the spectral gap is more involved: both the relevant quadratic forms and the scalar product of $\mathcal{H}$ are not diagonal in their spherical harmonics expansions. This requires the introduction of several new ingredients. We find it remarkable that a method based on conformal transformations also works in the present non conformally-invariant case. 

The upper bound in Theorem~\ref{thm:main_fived} follows from a general, abstract argument. The lower bound, on the other hand, is much more delicate and it will be obtained by the method of Bianchi--Egnell~\cite{BiEg91}. More precisely, we will obtain a local version of Theorem~1, with an effective constant for the lower bound. This local result will then be made global via the profile decomposition of Bahouri and G\'erard~\cite{BaGe99}, following the same steps as in the aforementioned~\cite{Ne18}. We refer to~\cite{DuMeRo11, Ne18} for further background and references.

Throughout all of this note, we only consider real-valued solutions to the wave equation. This is a natural assumption; for more details, see~\cite[Remark~3.2]{Ne18}.

In the next section, we will prove Theorem~\ref{thm:main_fived}. The necessary computations with spherical harmonics are collected in the appendix.
\section{Proof of Theorem~\ref{thm:main_fived}}
The right-hand inequality in \eqref{eq:main_fived} is an immediate consequence of~\cite[Proposition~2.1]{Ne18}. We will thus focus on the following theorem, a local version of Theorem~\ref{thm:main_fived}. Once this is proved, the global version will be a consequence of the profile decomposition of Bahouri and G\'erard~\cite{BaGe99}, applying verbatim the argument in~\cite[§~5]{Ne18}. We omit the details.
\begin{theorem}\label{thm:main_local}
  For all $u\colon\mathbb R^{1+5}\to \mathbb R$ satisfying $u_{tt}=\Delta u$ and $\mathrm{dist}(\ubold(0), \Mrom)< \norm{\ubold(0)}_{\mathcal{H}},$
  \begin{equation}\label{eq:prop_main_fived}
\frac{36}{85}\frac{1}{8\pi}\mathrm{dist}(\ubold(0), \Mrom)^2 + O(\mathrm{dist}(\ubold(0),\Mrom)^3)
 \le \frac{1}{8\pi}\norm{\ubold(0)}_{\mathcal{H}}^2-\norm{u}_{L^4(\R^{1+5})}^2.
 \end{equation}
\end{theorem}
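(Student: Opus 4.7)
The plan is to implement the Bianchi--Egnell scheme, reducing~\eqref{eq:prop_main_fived} to an explicit spectral gap computation. First I would use the invariance of both sides of~\eqref{eq:prop_main_fived} under the transformation $\ubold(0)\mapsto c\Gamma_{\!\alphabold}\ubold(0)$ to reduce to the case where the infimum in~\eqref{eq:distance_fived} is attained at $(c,\alphabold)=(1,\mathrm{id})$. Under the standing hypothesis $\mathrm{dist}(\ubold(0),\Mrom)<\norm{\ubold(0)}_{\mathcal H}$, such a minimizer exists by a short compactness argument. Writing $\ubold(0)=\fboldstar+\epsilon\hbold$ with $\norm{\hbold}_{\mathcal H}=1$ and $\epsilon=\mathrm{dist}(\ubold(0),\Mrom)$, the first-order minimality condition forces $\hbold$ to be $\mathcal H$-orthogonal to the finite-dimensional tangent space $T_{\fboldstar}\Mrom$, which is spanned by $\fboldstar$ itself together with the infinitesimal generators of the symmetry group at $\fboldstar$.

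Next, denoting by $U$ and $V$ the free-wave extensions of $\fboldstar$ and $\hbold$, I would Taylor-expand $\norm{u}_{L^4}^4=\norm{U+\epsilon V}_{L^4}^4$ and extract a square root. The first-order term in $\epsilon$ vanishes, since $\fboldstar$ is a critical point of $\norm{u}_{L^4}^2/\norm{\ubold(0)}_{\mathcal H}^2$: the Euler--Lagrange equation gives $\int U^3V\propto\Braket{\fboldstar|\hbold}_{\mathcal H}=0$. The Bez--Rogers equality $\norm{U}_{L^4}^2=(8\pi)^{-1}\norm{\fboldstar}_{\mathcal H}^2$ then yields the expansion
\[
\tfrac{1}{8\pi}\norm{\ubold(0)}_{\mathcal H}^2-\norm{u}_{L^4}^2=\epsilon^2\Bigl(\tfrac{1}{8\pi}-Q(\hbold)\Bigr)+O(\epsilon^3),
\]
where $Q(\hbold)=3\norm{U}_{L^4}^{-2}\int_{\R^{1+5}}U^2V^2$ is an explicit quadratic form on $T_{\fboldstar}\Mrom^\perp$. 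Theorem~\ref{thm:main_local} thus reduces to the sharp spectral inequality $Q(\hbold)\le\tfrac{49}{85}\cdot\tfrac{1}{8\pi}\norm{\hbold}_{\mathcal H}^2$ for every $\hbold\in T_{\fboldstar}\Mrom^\perp$.

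To evaluate this ratio I would transport the problem to the Einstein cylinder $\R\times\SSS^5$ via the Penrose conformal compactification, as in~\cite{Ne18}. There $U$ becomes an essentially constant profile, and $\int U^2V^2$ diagonalizes under the spherical harmonic decomposition on $\SSS^5$ combined with the Fourier decomposition on $\SSS^1$. The tangent directions in $T_{\fboldstar}\Mrom$ correspond to specific low-frequency harmonics on the cylinder, which saturate the top eigenvalue $1$ of the rescaled ratio $8\pi Q/\norm{\cdot}_{\mathcal H}^2$; the next eigenvalue on the $\mathcal H$-orthogonal complement is precisely $49/85$, yielding the gap $36/85$.

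The main obstacle, highlighted in the introduction, is the \emph{lack of conformal invariance} of~\eqref{eq:BezRogers}. Unlike the conformal Strichartz setting of~\cite{Ne18}, where the Sobolev scalar product pulls back to a form diagonal in the same basis as $Q$, here the energy scalar product~\eqref{eq:PhaseFive_scalprod} acquires off-diagonal entries on the cylinder that couple distinct Fourier frequencies on $\SSS^1$ inside each $\SSS^5$ harmonic block. The spectral gap therefore becomes a generalized eigenvalue problem for the pencil $Q-\lambda\Braket{\cdot|\cdot}_{\mathcal H}$, which must be solved block by block by finite-dimensional linear algebra. The required matrix entries are computed in the appendix; the delicate step is to verify that, after removing the top eigenvalue $1$ attained on $T_{\fboldstar}\Mrom$, no spherical-harmonic block produces an eigenvalue exceeding $49/85$.
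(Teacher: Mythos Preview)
Your high-level Bianchi--Egnell reduction matches the paper: invariance gives $\Phi(\fbold)=\Phi(c\fboldstar+\fboldbot)$ with $\fboldbot\bot T_{\fboldstar}\Mrom$, the Taylor expansion leaves a quadratic form, and the task is a coercivity bound on $(T_{\fboldstar}\Mrom)^\bot$, computed after the Penrose transform. The gaps are in your description of the spectral structure and in the method you propose to close it. The integral $\int U^2V^2$ does \emph{not} diagonalize in the spherical-harmonic basis on $\SSS^5$: as the paper computes, it is \emph{tridiagonal} in the degree $\ell$, coupling $\Fhat(\ell,\mbold)$ to $\Fhat(\ell\pm 1,\mbold)$; see~\eqref{eq:Quad_Fived_zero}--\eqref{eq:Quad_Fived_one}. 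The off-diagonal structure of $\norm{\cdot}_{\mathcal H}^2$ is of the same nature---it couples adjacent $\SSS^5$-degrees $\ell$ and $\ell+1$ via the factor $(1+X_0)$ in~\eqref{eq:stereographic_projection_aftermath}---and has nothing to do with $\SSS^1$-frequencies (the scalar product is purely spatial). Consequently the generalized eigenvalue problem decomposes, for each fixed $\mbold$, into an \emph{infinite}-dimensional tridiagonal pencil, so ``finite-dimensional linear algebra block by block'' cannot finish the argument.

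The paper supplies two ingredients you are missing. First, because the $\mathcal H$-orthogonality $\bot$ is itself non-diagonal in spherical harmonics, the paper introduces an auxiliary orthogonality $\widetilde\bot$ adapted to the harmonic expansion and proves a general transfer lemma (Lemma~\ref{lem:change_ortho}): coercivity on the $\widetilde\bot$-complement implies the same bound on the $\bot$-complement, using only that $Q$ is positive semidefinite and vanishes on $T_{\fboldstar}\Mrom$. Second, rather than computing eigenvalues of the infinite tridiagonal pencils, the paper proves the coercivity via an elementary \emph{diagonal dominance} criterion (Lemma~\ref{lem:tridiag_dominance}), applied after a change of variable~\eqref{eq:FromFtoG} that renders the off-diagonal entries rational. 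In particular, the paper does \emph{not} establish that the second eigenvalue is exactly $49/85$; it only shows that $\tfrac{36}{85}\tfrac{1}{8\pi}$ is the largest constant for which the diagonal-dominance test succeeds (Remark~\ref{rem:final}), which is weaker than the sharp spectral gap you assert.
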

Before we can begin with the proof, we give the precise definition of the symmetry operators $\Gamma_{\!\alphabold}$:
\begin{equation}\label{eq:GcalFiveAction}
	\Gamma_{\!\alphabold}\fbold:=
			R(t_0\sqrt{-\Delta}+\theta)
			\begin{bmatrix}
			            e^{\frac32 \sigma} f_0\Round{e^\sigma A(\cdot + x_0)} \\  
			            e^{\frac52 \sigma} f_1\Round{e^\sigma A(\cdot + x_0)}
			 \end{bmatrix}.
\end{equation}
Here, the matrix-valued operator $R$ is given by
\begin{equation}\label{eq:R_op}
R(\cdot):=
\begin{bmatrix} 
        \cos(\cdot ) & \frac{\sin( \cdot )}{\sqrt{-\Delta}}\\ 
    -\sqrt{-\Delta}\sin(\cdot) & \cos(\cdot)
\end{bmatrix},
\end{equation}
and the parameter $\alphabold$ is
\begin{equation}\label{eq:alpha_five}
	\begin{array}{cc}
		\alphabold=(t_0, \theta, \sigma, x_0, A), & t_0\in \R,\ \theta\in\SSS^1,\ \sigma \in\R,\ x_0\in \R^5,\ A\in \SO(5).
	\end{array}
\end{equation}
\begin{remark} This definition is analogous to~\cite[eq.~(8)]{Ne18}, which concerns Foschi's sharp conformal Strichartz estimate, mentioned in the introduction. However, here there are no Lorentz boosts. This is a first manifestation of the lack of full invariance of~\eqref{eq:BezRogers} under conformal transformations of Minkowski spacetime.
\end{remark}
\begin{remark}\label{rem:new_maximizer}
    In the aforementioned~\cite[Corollary~1.2]{BezRogers13}, Bez and Rogers actually considered $(0, (1+\abs{\cdot}^2)^{-3})$ instead of $\fboldstar$. Both belong to the orbit $\Mrom$; letting $\alphabold=(0, \pi/2, 0,0,0)$, we have that $-2^{-2}\Gamma_{\!\alphabold}\fboldstar=(0, (1+\abs{\cdot}^2)^{-3})$. Thus, the choice of which maximiser to consider is inessential. 
\end{remark}
These operators $\Gamma_{\!\alphabold}$ preserve both sides of the Strichartz inequality~\eqref{eq:BezRogers}. Precisely, we introduce the notation 
\begin{equation}\label{eq:def_wave_propagator}
    S\fbold(t,x):=\cos(t\sqrt{-\Delta})f_0(x) + \frac{\sin(t\sqrt{-\Delta})}{\sqrt{-\Delta}} f_1(x), 
\end{equation}
for the solution operator to the wave equation $u_{tt}=\Delta u$ with initial data $\ubold(0)=\fbold\in\mathcal{H}$,  and we have that (see~\cite[eq.~(9)]{Ne18})
\begin{equation}\label{eq:five_dim_unitary}
	\begin{array}{cc}
	\norm{\Gamma_{\!\alphabold} \fbold}_{\mathcal{H}} =\norm{\fbold}_{\mathcal{H}}, & \norm{S\Gamma_{\!\alphabold} \fbold}_{L^4(\R^{1+5})}=\norm{S\fbold}_{L^4(\R^{1+5})}.
	\end{array}
\end{equation}
With this newly introduced notation, we can denote the right-hand side of the sought inequality~\eqref{eq:prop_main_fived}, which is known as the \emph{deficit functional} of~\eqref{eq:BezRogers}, as follows:
\begin{equation}\label{eq:deficit_funcional_def}
    \Phi(\fbold):=\frac{1}{8\pi}\norm{\fbold}_{\mathcal{H}}^2-\norm{S \fbold}_{L^4(\R^{1+5})}^2.
\end{equation}
We can now begin the proof of Theorem~\ref{thm:main_local}. This will occupy the rest of the section.

For $\fbold\in\mathcal{H}\setminus\Set{\obold}$, the invariance of $\Phi$ under the operators $\Gamma_{\!\alphabold}$ yields $\Phi(\fbold)=\Phi(c\fboldstar + \fboldbot)$, where $c\ne 0$ and
\begin{equation}\label{eq:min_probl_result_fived}
	\begin{array}{ccc}
		\norm{\fboldbot}_{\mathcal{H}}=\mathrm{dist}(\fbold, \Mrom),&\text{and} &\fboldbot \bot T_{\fboldstar} \Mrom;
	\end{array}
\end{equation}
see the proof of~\cite[Proposition~5.3]{Ne18}. Here $\bot$ denotes orthogonality with respect to $\Braket{\cdot|\cdot}_{\mathcal H}$, and the tangent space is given by
\begin{equation}\label{eq:tangent_space_Mrom_fived}
	T_\fboldstar \Mrom := \mathrm{span} \Set{ \fboldstar, \left.\nabla_{\!\alphabold} \Gamma_{\!\alphabold} \fboldstar\right|_{\alphabold=\boldsymbol{0}}}, 
\end{equation}
where $\nabla_{\!\alphabold}$ is the list of derivatives with respect to all parameters \eqref{eq:alpha_five}. We will give a precise description of such tangent space below. 

Obviously, $\Phi(\fboldstar)=0$. Moreover, $\fboldstar$ is a critical point of $\Phi$, being indeed a global minimiser. The expansion of $\Phi$ to second order thus reads
\begin{equation}\label{eq:psi_taylor_fived}
    \Phi(\fbold)=\Phi(c\fboldstar+\fboldbot)=
    Q(\fboldbot, \fboldbot) + O(\lVert \fboldbot\rVert_{\mathcal H}^3),  
\end{equation}
where $Q$ is a quadratic form, whose precise expression we will give in the forthcoming~\eqref{eq:general_quadratic_form_fived}. We are thus reduced to prove the following coercivity; 
\begin{equation}\label{eq:coercivity_target}
    \begin{array}{cc}    
        Q(\fboldbot, \fboldbot)\ge \frac{36}{85}\frac{1}{8\pi}\lVert \fboldbot\rVert_{\mathcal H}^2, & \text{for all }\fboldbot\bot T_{\fboldbot}\Mrom.
    \end{array}
\end{equation}

As mentioned in the introduction, the scalar product $\Braket{\cdot|\cdot}_\mathcal{H}$ and the associated orthogonality relation $\bot$ are cumbersome to work with; see the forthcoming Remark~\ref{rem:cumbersome_orthogonality} in the appendix. The following lemma shows that we are free to modify $\bot$, leveraging on the fact that $\fboldstar$ is a maximiser for~\eqref{eq:BezRogers}, hence a minimiser for the deficit functional $\Phi$. 
\begin{lemma}\label{lem:change_ortho}
    Let $\widetilde{\Braket{\cdot|\cdot}}$ be a scalar product on $\mathcal{H}$, and denote by $\widetilde{\bot}$ the corresponding orthogonality relation. If there is a $C_\star>0$ such that $Q(\gbold, \gbold)\ge C_\star\lVert \gbold\rVert_{\mathcal H}^2$ for all $\gbold\widetilde{\bot}T_\fboldstar \Mrom$, then $Q(\fboldbot, \fboldbot)\ge C_\star\lVert \fboldbot\rVert_{\mathcal{H}}^2$ for all $\fboldbot \bot T_{\fboldstar}\Mrom$.    
\end{lemma}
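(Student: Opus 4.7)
The plan is to exploit the fact that the quadratic form $Q$ vanishes identically on the tangent space $T_{\fboldstar}\Mrom$. Once this null-space property is available, tangential directions contribute nothing to $Q$, and so one is free to carry out the tangential--transverse decomposition of $\fboldbot$ using whichever scalar product is most convenient.

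The only step requiring any real argument is the null-space property: I aim to show that $Q(\mbold,\hbold)=0$ for every $\mbold\in T_{\fboldstar}\Mrom$ and every $\hbold\in\mathcal{H}$. Since $\Phi\ge 0$ globally and $c\fboldstar$ is a minimiser of $\Phi$, the Taylor expansion~\eqref{eq:psi_taylor_fived} immediately shows that $Q$ is positive semidefinite on $\mathcal{H}$. On the other hand, given $\mbold\in T_{\fboldstar}\Mrom$, I pick a smooth curve $\fbold(\epsilon)$ on $\Mrom$ with $\fbold(0)=c\fboldstar$ and $\dot{\fbold}(0)=\mbold$; since $\Phi\equiv 0$ along $\Mrom$, substituting into~\eqref{eq:psi_taylor_fived} and reading off the coefficient of $\epsilon^{2}$ forces $Q(\mbold,\mbold)=0$. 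The Cauchy--Schwarz inequality for positive semidefinite bilinear forms then upgrades this to $Q(\mbold,\hbold)=0$ for every $\hbold\in\mathcal{H}$.

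Given $\fboldbot\bot T_{\fboldstar}\Mrom$, I then decompose it relative to the new scalar product as $\fboldbot=\gbold+\mbold$, with $\mbold\in T_{\fboldstar}\Mrom$ and $\gbold\,\widetilde{\bot}\,T_{\fboldstar}\Mrom$; this is well defined because $T_{\fboldstar}\Mrom$ is finite dimensional (at most $19$-dimensional, by the parameter list~\eqref{eq:alpha_five}), hence closed. The null-space property gives $Q(\fboldbot,\fboldbot)=Q(\gbold,\gbold)$. Meanwhile, the original orthogonality $\Braket{\fboldbot|\mbold}_{\mathcal{H}}=0$ combined with $\gbold=\fboldbot-\mbold$ yields
\begin{equation*}
\norm{\gbold}_{\mathcal{H}}^2=\norm{\fboldbot}_{\mathcal{H}}^2-2\Braket{\fboldbot|\mbold}_{\mathcal{H}}+\norm{\mbold}_{\mathcal{H}}^2=\norm{\fboldbot}_{\mathcal{H}}^2+\norm{\mbold}_{\mathcal{H}}^2\ge\norm{\fboldbot}_{\mathcal{H}}^2.
\end{equation*}
Applying the hypothesis to $\gbold$ then gives $Q(\fboldbot,\fboldbot)=Q(\gbold,\gbold)\ge C_\star\norm{\gbold}_{\mathcal{H}}^2\ge C_\star\norm{\fboldbot}_{\mathcal{H}}^2$, as required.

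The only step where care is needed is the null-space property, but I do not anticipate any serious obstacle there, as it rests entirely on ingredients already in place: the Taylor expansion~\eqref{eq:psi_taylor_fived}, the invariance~\eqref{eq:five_dim_unitary} of $\Phi$ under the action of $\Gamma_{\!\alphabold}$, and the homogeneity $\Phi(c\fbold)=c^{2}\Phi(\fbold)$. Everything after that is a short linear-algebraic manipulation.
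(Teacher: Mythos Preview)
Your proof is correct and follows essentially the same approach as the paper: decompose $\fboldbot=\gbold+\hbold$ with $\hbold\in T_{\fboldstar}\Mrom$ and $\gbold\,\widetilde{\bot}\,T_{\fboldstar}\Mrom$, observe that $Q$ vanishes on tangential directions (via $\Phi\equiv 0$ along $\Mrom$ and Cauchy--Schwarz for the positive-semidefinite $Q$), and then use the original $\mathcal H$-orthogonality $\Braket{\fboldbot|\hbold}_{\mathcal H}=0$ to get $\norm{\gbold}_{\mathcal H}^2=\norm{\fboldbot}_{\mathcal H}^2+\norm{\hbold}_{\mathcal H}^2\ge\norm{\fboldbot}_{\mathcal H}^2$. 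Your write-up is slightly more explicit about why the decomposition exists (finite dimensionality of $T_{\fboldstar}\Mrom$) and about the role of homogeneity, but the argument is the same.
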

\begin{proof}
    Let $\fboldbot\bot T_{\fboldstar}\Mrom$ and decompose it as $\fboldbot=\gbold+\hbold$, where $\gbold\widetilde{\bot}T_{\fboldstar}\Mrom$ and $\hbold\in T_{\fboldstar}\Mrom$.  Note that $Q(\hbold, \hbold)=0$, since $\partial^2_{\epsilon}\Phi(c\fboldstar+\epsilon \hbold)|_{\epsilon=0}$ vanishes in that case, as we are differentiating along some curve associated to some symmetry of~\eqref{eq:BezRogers}, on which $\Phi$ is constant. 
    
    Since $\fboldstar$ is a global minimiser of $\Phi$, in particular $Q$ is a positive-semidefinite quadratic form. We can thus apply the Cauchy-Schwarz inequality to obtain that
    \begin{equation}\label{eq:Cauchy-Schwarz_toQ}
        \lvert Q(\gbold, \hbold)\rvert\le Q(\gbold, \gbold)Q(\hbold, \hbold)=0. 
    \end{equation}
    We conclude that 
    \begin{equation}\label{eq:lemma_proof_concl}
        Q(\fboldbot, \fboldbot)=Q(\gbold, \gbold)\ge C_\star\lVert \gbold\rVert_{\mathcal H}^2=C_\star\lVert\fboldbot\rVert_{\mathcal H}^2+C_\star\lVert \hbold\rVert_{\mathcal H}^2\ge C_\star\lVert\fboldbot\rVert_{\mathcal H}^2.
    \end{equation}
\end{proof}

From now on we will be working on the sphere $\SSS^5\subset \R^6$, whose points we denote by $X=(X_0, \vec{X})$, with $\vec{X}=(X_1, \ldots, X_5)$; thus, $\sum_0^5 X_j^2=1$. We let $d\sigma$ denote the standard hypersurface measure on $\SSS^5$. 
\begin{definition}[Penrose transform] \label{def:penrose} We identify $\fbold=(f_0, f_1)\in \mathcal H$ with the pair $(F_0, F_1)$ of real functions on $\SSS^5$ via the formulas
\begin{equation}\label{eq:Penrose_Transform}
    \begin{array}{ccc}
         f_0(x)=(1+X_0)^2F_0(X), & f_1(x)=(1+X_0)^3 F_1(X), & \text{where }x=\frac{\vec{X}}{1+X_0}.
    \end{array}
\end{equation}
\end{definition}
\begin{remark}\label{rem:penrose_point}
The map $(X_0, \vec{X})\mapsto x=\frac{\vec{X}}{1+X_0}$ is the stereographic projection of $\mathbb S^5\setminus\{(-1, \vec{0})\}$ onto $\mathbb R^5$. The identification~\eqref{eq:Penrose_Transform} implies (see, for example,~\cite[eq.~(19)-(20)]{Ne18})
\begin{equation}\label{eq:penrose_evolution}
    \begin{array}{cc}
         S\fbold(t, r\omega)=\cos(T\sqrt{4-\Delta_{\SSS^5}})F_0(X) + \frac{\sin(T\sqrt{4-\Delta_{\SSS^5}})}{\sqrt{4-\Delta_{\SSS^5}}}F_1(X), & \text{where }X=(\cos R, \sin R\omega), 
    \end{array}
\end{equation}
for $r\ge 0$ and $\omega\in \mathbb S^4$. The variables $(T, X)\in [-\pi, \pi]\times \mathbb S^5$ are related to $(t, r\omega)\in \mathbb R^{1+5}$ via the formulas $T=\arctan(t+r)+\arctan(t-r), R=\arctan(t+r)-\arctan(t-r)$, which, however, we will not need in the sequel.
\end{remark}
Under~\eqref{eq:Penrose_Transform}, $\fboldstar$ corresponds to the pair of constant functions $F_{\!\star\,0}=1, F_{\!\star\,1}=0$, while $T_\fboldstar\Mrom$ corresponds to the following space of polynomials in $(X_0, X_1, \ldots, X_5)$;
\begin{equation}\label{eq:tangent_space_dim_five}
	T_{\fboldstar}\Mrom\equiv\Set{ \begin{bmatrix} (1+X_0)^2\ (\sum_{j=0}^5 a_j X_j + a_6) \\ (1+X_0)^3\ ( b_0X_0 + b_1) \end{bmatrix}  : a_j, b_j\in \R}.
\end{equation}
This can be seen by redoing verbatim the computations in~\cite[Section~3]{Ne18}.

\begin{remark}\label{rem:smaller_tangspace}
    Note that~\eqref{eq:tangent_space_dim_five} is strictly smaller than the analogous tangent space~\cite[eq.~(34)]{Ne18} in the conformal case. This is because the present case has less symmetries, as we saw.
\end{remark}

We now introduce the system of spherical harmonics on $\SSS^5$ which we will use. This fine description of spherical harmonics was not needed in the aforementioned~\cite{Ne18}.
\begin{definition}\label{def:sph_harm} Let $\Set{ Y_{\ell, \mbold}}$ denote a complete orthonormal system of $L^2(\SSS^5)$, fixed once and for all according to the following prescription. The indices range on 
\begin{equation}\label{eq:QuantumNumbersFive}
    \begin{array}{cc}
	    \ell\in\mathbb N_{\ge 0}, &\mbold\in\QuantumN(\ell):=\{ \mbold=(m_1, m_2, m_3, m_4)\in \Z^4\ :\ \ell \ge m_1\ge m_2 \ge m_3 \ge |m_4|\},
    \end{array}
\end{equation}
and each $Y_{\ell, \mbold}$ has the form
\begin{equation}\label{eq:structure_spherical_harm}
    Y_{\ell, \mbold}(X_0, \vec{X})=P_\ell^{m_1}(X_0) Y_{\mbold}^{\SSS^4}(\vec{X}/\lvert \vec X \rvert),
\end{equation}
with $P_\ell^{m_1}$ denoting the $6$-dimensional normalized associated Legendre function (see the appendix), while $\{Y^{\SSS^4}_{\mbold}\}_{\mbold\in \QuantumN(\ell)}$ denotes a orthonormal system of real spherical harmonics in $L^2(\SSS^4)$ of degree $m_1$.\footnote{In particular, each $Y_{\ell, \mbold}$ is a spherical harmonic on $\SSS^5$ of degree $\ell$ and such that $\lVert Y_{\ell, \mbold}\rVert_{L^2(\mathbb S^5)}=1$; see~\cite[Lemma~1, p.~55]{Muller98}.}  For each $F\in L^2(\SSS^5)$, let $\Fhat(\ell, \mbold):=\int_{\SSS^5} F Y_{\ell,\mbold}\, d\sigma$.
\end{definition}
With these definitions, we can characterize~\eqref{eq:tangent_space_dim_five} as
\begin{equation}\label{eq:belong_tangent_fived}
	\begin{array}{ccc}
		\fbold\in T_{\fboldstar} \Mrom & \iff & \begin{cases}	
			\Fhat_0(\ell, \mbold)=0, & \ell\ge 2,\\ 
			\Fhat_1(\ell,\mbold)=0, & \ell \ge 2\text{ or }\ell=1,\ \mbold\ne \obold,
			\end{cases}
		\end{array}
	\end{equation}
which suggests the introduction of the following alternative orthogonality relation, towards the application of Lemma~\ref{lem:change_ortho}:
\begin{equation}\label{eq:TildeBot}
	\begin{array}{ccc}
	\gbold\widetilde{\bot} T_{\fboldstar}\Mrom & \overset{\mathrm{def}}{\iff} & \begin{cases} \Ghat_0(\ell, \mbold)=0, \\ \Ghat_1(\ell, \obold)=0,\end{cases}\, \text{for } \ell=0,1,\ \mbold\in\QuantumN(\ell).
	\end{array}
\end{equation}
Here, $(F_0, F_1)$ and $(G_0, G_1)$ denote the Penrose transforms~\eqref{eq:Penrose_Transform} of $\fbold$ and $\gbold$ respectively. Note that $\widetilde{\bot}$ is different from the standard orthogonality $\bot$ of $\mathcal H$; see the appendix. 

\begin{remark}\label{rem:cumbersome_because_not_conformal}
    In the conformal case of~\cite{Ne18}, there is no need to introduce such alternative orthogonality relations. Indeed, the natural scalar product considered there is diagonalized by the spherical harmonics, after the Penrose transform; see~\cite[eq.~(24)]{Ne18}. 
\end{remark}

For $\fbold=(f_0, f_1)\in\mathcal{H}$, a Taylor expansion to second order of the deficit functional~\eqref{eq:deficit_funcional_def} shows that the quadratic form $Q(\fbold, \fbold)$ equals
\begin{equation}\label{eq:general_quadratic_form_preliminary}
    \begin{split}
        &\frac1{4\pi}\lVert \fbold\rVert_{\mathcal H}^2 - 6\lVert S\fboldstar\rVert_{L^4(\mathbb R^{1+5})}^{-2}\int_{\mathbb R^{1+5}} (S\fboldstar)^2(S \fbold)^2 + 4 \lVert S\fboldstar\rVert_{L^4(\mathbb R^{1+5})}^{-6}\left( \int_{\mathbb R^{1+5}} (S\fboldstar)^3 S\fbold\right)^2. \\
    \end{split}
\end{equation}
We already observed that $\Phi(\fboldstar)=0=\partial_{\epsilon}\Phi(\fboldstar + \epsilon \fbold)|_{\epsilon=0}$. This  yields
\begin{equation}\label{eq:simplify_quadform}
    \begin{array}{cc}
        \displaystyle\lVert S\fboldstar \rVert_{L^4(\mathbb R^{1+5})}^2=\frac{1}{8\pi}\lVert \fboldstar\rVert_{\mathcal H}^2, &\displaystyle \int_{\mathbb R^{1+5}}(S\fboldstar)^3 S\fbold = \frac{\lVert S\fboldstar\rVert_{L^4(\mathbb R^{1+5})}^4}{\lVert \fboldstar \rVert_{\mathcal H}^2} \Braket{ \fboldstar | \fbold}_{\mathcal H},
    \end{array}
\end{equation}
which we can insert into~\eqref{eq:general_quadratic_form_preliminary} to obtain the simpler expression 
\begin{equation}\label{eq:general_quadratic_form_fived}
    Q(\fbold, \fbold)=\frac{16\pi}{\lVert\fboldstar\rVert_{\mathcal H}^2}\left[ \frac{1}{(8\pi)^2}\big(2(\Braket{\fboldstar|\fbold}_{\mathcal H})^2+\lVert\fboldstar\rVert_{\mathcal H}^2\lVert\fbold\rVert_{\mathcal H}^2\big) -3\int_{\mathbb R^{1+5}}(S\fboldstar)^2(S \fbold)^2\right].
\end{equation}
It is easy to see that $Q(\fbold, \fbold)=Q((f_0, 0), (f_0,0))+Q((0,f_1), (0, f_1))$ (see~\cite[eq.~(50)]{Ne18}). The computations in the subsection~\ref{sec:appendix_quadforms} of the appendix show that, for all $\fbold\widetilde{\bot}T_{\fboldstar}\Mrom$,
\begin{equation}\label{eq:Quad_Fived_zero}
    Q((f_0, 0), (f_0, 0))= 
    \frac1{4\pi}\Big[\sum_{\ell=2}^\infty\sum_{\mbold \in \QuantumN(\ell)}\!\! \alpha_{\ell, \mbold}\Fhat_0(\ell,\mbold)^2 
     \!+\!\beta_{\ell, \mbold}\Fhat_0(\ell+1,\mbold)\Fhat_0(\ell,\mbold) \Big],
\end{equation}
while 
\begin{equation}\label{eq:Quad_Fived_one}
\begin{split} 
Q((0, f_1),(0, f_1))=\,
	\frac1{4\pi}\Big[&\sum_{\mbold\in\QuantumN(1), m_1=1} 3\,\frac{\Fhat_1(1,\mbold)^2}{9}  \\ 
    & +\sum_{\ell=2}^\infty\sum_{\mbold \in \QuantumN(\ell)}\!\! \alpha_{\ell, \mbold}\frac{\Fhat_1(\ell,\mbold)^2}{(\ell+2)^2}  
    \!+\! \beta_{\ell, \mbold}\frac{\Fhat_1(\ell,\mbold)\Fhat_1(\ell+1,\mbold)}{(\ell+2)(\ell+3)}\Big],
   \end{split}
  \end{equation}
where the coefficients are
%
%
\begin{equation}\label{eq:alpha_beta}
    \begin{array}{cc}
        \alpha_{\ell, \mbold}=\frac{\ell^4+8\ell^3+11\ell^2-20\ell-12+6m_1^2+18m_1}{(\ell+1)(\ell+3)}, &      \beta_{\ell, \mbold}=(\ell-1)(\ell+6)\sqrt{\frac{ (\ell +1 -m_1)(\ell+4 +m_1)}{(\ell+2)(\ell+3)}}.
    \end{array}
\end{equation}
These formulas show that $Q$ has a kind of tridiagonal structure; for example, neglecting all summands with $\mbold\ne \obold$, we can formally write~\eqref{eq:Quad_Fived_zero} as
\begin{equation}\label{eq:infinite_tridiagonal}
    \begin{bmatrix} \Fhat_0(2, \obold) & \Fhat_0(3, \obold) & \Fhat_0(4, \obold) &\ldots\end{bmatrix} 
    \begin{bmatrix} 
        \alpha_{2, \obold} & \tfrac12\beta_{2, \obold} & 0 &  0 & \\ 
        \tfrac12\beta_{2, \obold} & \alpha_{3, \obold} & \tfrac12\beta_{3, \obold} & 0&    \\
         0& \tfrac12\beta_{3, \obold} & \alpha_{4, \obold} & \tfrac12\beta_{4, \obold} &   \\ 
           & & \ddots & \ddots & \ddots \\
    \end{bmatrix}
    \begin{bmatrix} \Fhat_0(2, \obold) \\  \Fhat_0(3, \obold) \\ \Fhat_0(4, \obold) \\\vdots\end{bmatrix}.
\end{equation}
\begin{remark}\label{rem:ConformalDiagonalQuadratic} 
    The analogous quadratic form~\cite[eq.~(52)]{Ne18} for the conformal case is diagonal.
\end{remark}

In order to exploit such tridiagonal structure, we introduce the following criterion. 
  \begin{lemma}[Diagonal dominance] \label{lem:tridiag_dominance}
Let $L\in \N_{\ge 0}$ and let 
\begin{equation}\label{eq:quad_coeff}
	\Set{ a_{\ell, \mbold}, b_{\ell,\mbold} : \ell\in \N_{\ge L},\ \mbold\in\QuantumN(\ell)}
\end{equation}
 be real sequences satisfying
\begin{equation}\label{eq:tridiagonal_dominance_condition}
	\begin{cases}
		a_{L, \mbold}\ge \frac12 \abs{b_{L,\mbold} } , & \\
		a_{\ell,\mbold}\ge \frac12\Round{\abs{ b_{\ell,\mbold} }+ \abs{b_{\ell-1,\mbold}}}, & \ell>L,
	\end{cases}
\end{equation}
Then the quadratic functional $T$, defined by
\begin{equation}\label{eq:quad_form_spherical}
	T(F)=\sum_{\ell=L}^\infty \sum_{\mbold\in\QuantumN(\ell)} a_{\ell,\mbold} \Fhat(\ell,\mbold)^2 +b_{\ell,\mbold}\Fhat(\ell,\mbold)\Fhat(\ell+1,\mbold),
\end{equation}
satisfies $T(F)\ge 0$ for all $F\in L^2(\SSS^5)$.
\end{lemma}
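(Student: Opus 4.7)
The plan is to prove nonnegativity of $T$ by the classical diagonal-dominance argument for tridiagonal quadratic forms: estimate each cross term via the AM--GM inequality $2\abs{xy}\le x^2+y^2$, regroup, and verify that the resulting diagonal coefficients are nonnegative under the hypothesis~\eqref{eq:tridiagonal_dominance_condition}. Since the coupling in $T$ is only between $\Fhat(\ell,\mbold)$ and $\Fhat(\ell+1,\mbold)$ at the same $\mbold$, and since $\QuantumN(\ell-1)\subseteq \QuantumN(\ell)$, the form $T$ decouples into the direct sum, over $\mbold$, of infinite symmetric tridiagonal forms on the chains $\ell\ge\max(L, m_1)$.

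Concretely, I would first apply
\begin{equation*}
    \abs{b_{\ell,\mbold}\Fhat(\ell,\mbold)\Fhat(\ell+1,\mbold)}\le \tfrac12 \abs{b_{\ell,\mbold}}\bigl(\Fhat(\ell,\mbold)^2 + \Fhat(\ell+1,\mbold)^2\bigr),
\end{equation*}
and then regroup by reindexing $\ell\to \ell-1$ in the contribution $\tfrac12\abs{b_{\ell,\mbold}}\Fhat(\ell+1,\mbold)^2$. This produces a diagonal lower bound $T(F)\ge \sum_{\ell,\mbold} c_{\ell,\mbold}\Fhat(\ell,\mbold)^2$ in which~\eqref{eq:tridiagonal_dominance_condition} forces $c_{\ell,\mbold}\ge 0$: at the bottom index $\ell=\max(L, m_1)$ only the coupling to $\Fhat(\ell+1,\mbold)$ is present, so the required bound is $a_{\ell,\mbold}\ge \tfrac12\abs{b_{\ell,\mbold}}$; for higher $\ell$ both couplings appear and the bound is $a_{\ell,\mbold}\ge \tfrac12(\abs{b_{\ell,\mbold}}+\abs{b_{\ell-1,\mbold}})$.

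The only subtlety is matching the bottom of each chain with the hypothesis~\eqref{eq:tridiagonal_dominance_condition}, which is stated at $\ell = L$. When $m_1>L$ the chain starts at $m_1$, and although the hypothesis there is the one for $\ell>L$, it a fortiori implies $a_{m_1,\mbold}\ge \tfrac12\abs{b_{m_1,\mbold}}$, which is exactly what is needed at that boundary. I do not anticipate any real obstacle: the argument is the textbook Gershgorin-type bound for symmetric tridiagonal matrices, the only bookkeeping being the $\mbold$-dependent cutoff of the chain.
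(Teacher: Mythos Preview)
Your proposal is correct and is essentially the same argument as the paper's: the paper replaces $a_{\ell,\mbold}$ by the lower bound $\tfrac12(\abs{b_{\ell,\mbold}}+\abs{b_{\ell-1,\mbold}})$ (with the convention $b_{\ell,\mbold}=0$ for $\ell<L$ or $\ell<m_1$), reindexes, and completes the square to $\tfrac12\abs{b_{\ell,\mbold}}\big(\Fhat(\ell,\mbold)+\mathrm{sign}(b_{\ell,\mbold})\Fhat(\ell+1,\mbold)\big)^2\ge 0$, which is algebraically the same as your AM--GM step followed by regrouping. Your explicit treatment of the chain endpoint at $\ell=\max(L,m_1)$ plays the role of the paper's boundary convention.
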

\begin{proof}
With the convention that $b_{\ell,\mbold}=0$ if $\ell<L$ or $\ell<m_1$, we can bound $T(F)$ from below by 
\begin{equation}\label{eq:proof_tridiag_dominance}
	\begin{split}
		T(F)&\ge \sum_{\underset{\mbold\in\QuantumN(\ell)}{\ell\ge L}} \frac{\abs{b_{\ell,\mbold}}}{2} \Fhat(\ell,\mbold)^2 +\frac{\abs{b_{\ell-1,\mbold}}}{2}\Fhat(\ell,\mbold)^2 + b_{\ell,\mbold}\Fhat(\ell,\mbold)\Fhat(\ell+1,\mbold) \\
		&\ge \sum_{\underset{\mbold\in\QuantumN(\ell)}{\ell\ge L}}\!\!\frac{1}{2}\abs{b_{\ell,\mbold}} \Round{ \Fhat(\ell,\mbold) +\mathrm{sign}(b_{\ell, \mbold} )\Fhat(\ell+1,\mbold)}^2 \ge 0.
	\end{split}
\end{equation}
\end{proof}
We can finally apply this lemma to obtain the desired lower bound. Recall that the relation $\widetilde{\bot}$ has been defined in \eqref{eq:TildeBot}.
\begin{proposition}\label{prop:Qgzero_positive}
For all $\fbold\, \widetilde{\bot}\, T_\fboldstar \Mrom$,
\begin{equation}\label{eq:Qg_positive}
		Q(\fbold, \fbold)\ge \frac{36}{85}\frac{1}{8\pi}\norm{\fbold}_{\mathcal H}^2.
\end{equation}
\end{proposition}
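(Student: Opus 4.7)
The plan is to reduce the coercivity estimate~\eqref{eq:Qg_positive} to two independent applications of Lemma~\ref{lem:tridiag_dominance}, one for the ``position'' part $(f_0,0)$ and one for the ``momentum'' part $(0,f_1)$. The starting point is the splitting $Q(\fbold,\fbold)=Q((f_0,0),(f_0,0))+Q((0,f_1),(0,f_1))$ noted just before~\eqref{eq:Quad_Fived_zero}, combined with the trivial decomposition $\norm{\fbold}_{\mathcal{H}}^2=\int_{\R^5}\abs{\nabla f_0}^2\,dx+\int_{\R^5}f_1^2\,dx$ coming directly from~\eqref{eq:PhaseFive_scalprod}. This decouples the target inequality into two estimates, each involving only one of $\Fhat_0$ or $\Fhat_1$.

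Next, I would express both $\int\abs{\nabla f_0}^2\,dx$ and $\int f_1^2\,dx$ in terms of the spherical-harmonic coefficients from Definition~\ref{def:sph_harm}, using the Penrose identification~\eqref{eq:Penrose_Transform}. As emphasised in Remark~\ref{rem:cumbersome_because_not_conformal}, the resulting expressions are not diagonal in $\ell$; but the appendix computations should cast them as tridiagonal forms of the same template as~\eqref{eq:quad_form_spherical}. Subtracting $\tfrac{36}{85}\tfrac{1}{8\pi}$ times these expansions from~\eqref{eq:Quad_Fived_zero} and~\eqref{eq:Quad_Fived_one} then produces two new tridiagonal forms $T_0(\Fhat_0)$ and $T_1(\Fhat_1)$ whose nonnegativity is precisely what we need to prove.

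For $T_0$, the $\widetilde{\bot}$ constraint~\eqref{eq:TildeBot} annihilates $\Fhat_0(\ell,\mbold)$ for $\ell=0,1$, so the sum begins at $L=2$ and Lemma~\ref{lem:tridiag_dominance} applies as soon as~\eqref{eq:tridiagonal_dominance_condition} has been verified. For $T_1$, I observe that $\beta_{1,\mbold}=0$ in~\eqref{eq:alpha_beta} because of the factor $\ell-1$, so the $\ell=1$, $\mbold\ne\obold$ contributions that survive the $\widetilde{\bot}$ constraint decouple from the $\ell\ge 2$ block. The decoupled terms can be treated by a direct comparison of the isolated term $\Fhat_1(1,\mbold)^2/3$ in~\eqref{eq:Quad_Fived_one} against the corresponding diagonal coefficient in the $\ell=1$ part of $\int f_1^2\,dx$, while the $\ell\ge 2$ block is again handled by Lemma~\ref{lem:tridiag_dominance} with $L=2$.

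The main obstacle is the verification of the diagonal dominance condition~\eqref{eq:tridiagonal_dominance_condition} for the two shifted sequences. The very specific constant $\tfrac{36}{85}$ is presumably optimal for this method, so the inequalities in~\eqref{eq:tridiagonal_dominance_condition} will be tight at some extremal index (most likely $\ell=2$, $\mbold=\obold$) and must then hold uniformly in $\ell\ge 2$ and in $\mbold\in\QuantumN(\ell)$. Carrying this out requires careful manipulation of the rational expressions~\eqref{eq:alpha_beta} in $\ell$ and $m_1$, together with the analogous coefficients coming from the norm expansions; the difficulty is purely computational, with no new conceptual ingredient beyond Lemma~\ref{lem:tridiag_dominance}.
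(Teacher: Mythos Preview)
Your overall strategy matches the paper's: split into the $(f_0,0)$ and $(0,f_1)$ pieces, express $Q$ minus $\tfrac{36}{85}\tfrac{1}{8\pi}\norm{\cdot}_{\mathcal H}^2$ as a tridiagonal form in the spherical-harmonic coefficients, and verify the hypotheses of Lemma~\ref{lem:tridiag_dominance}. Your prediction that the inequality is saturated at $\ell=2$, $\mbold=\obold$ is exactly right; see~\eqref{eq:Qfive_minus_C_diag_dominant_stepone}. The paper also inserts a change of variable~\eqref{eq:FromFtoG},~\eqref{eq:FromGtoH} before checking diagonal dominance, chosen so that the off-diagonal coefficients $b_{\ell,\obold}$ become rational; this is only a computational convenience, but it makes the verification considerably cleaner than working directly with~\eqref{eq:alpha_beta}.

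There is, however, a genuine gap in your treatment of the $(0,f_1)$ piece. You argue that because $\beta_{1,\mbold}=0$ in~\eqref{eq:alpha_beta}, the surviving $\ell=1$ modes decouple from the $\ell\ge 2$ block in $T_1$. That is false: while $Q((0,f_1),(0,f_1))$ itself has no $(\ell=1)\leftrightarrow(\ell=2)$ cross term, the subtracted norm does. By~\eqref{eq:Ltwo_stereographic}, $\norm{(0,f_1)}_{\mathcal H}^2$ contains the off-diagonal term $2\,\mathrm{C}_5(1,1)\,\Fhat_1(1,\mbold)\Fhat_1(2,\mbold)$ with $\mathrm{C}_5(1,1)\ne 0$, so after subtraction the tridiagonal form $T_1$ acquires a nonzero coupling $\tilde b_{1,\mbold}=-\tfrac{36}{85}\tfrac{1}{8\pi}\sqrt{3/5}$ (in the paper's rescaled variables; see~\eqref{eq:T_Op_One}). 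Consequently the $\ell=1$, $m_1=1$ modes cannot be handled by a bare diagonal comparison: you must include them in the tridiagonal analysis, check $\tilde a_{1,\mbold}\ge \tfrac12\lvert \tilde b_{1,\mbold}\rvert$, and, crucially, re-check the $\ell=2$, $m_1=1$ row with the extra contribution $\tfrac12\lvert \tilde b_{1,\mbold}\rvert$ on the left; this is done in~\eqref{eq:a_one_check}. Once this is corrected, your plan goes through and coincides with the paper's proof.
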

Once Proposition~\ref{prop:Qgzero_positive} is proved, Lemma~\ref{lem:change_ortho} will imply the same lower bound with the standard orthogonality $\bot$ instead of $\widetilde{\bot}$, thus establishing the required coercivity of $Q$ and completing the proof of Theorem~\ref{thm:main_local}, hence of Theorem~\ref{thm:main_fived}.
\begin{proof}[Proof of Proposition~\ref{prop:Qgzero_positive}]
We observed that $Q( \fbold, \fbold)=Q((f_0,0), (f_0,0))+Q((0, f_1), (0, f_1))$. We consider the term $Q((f_0, 0), (f_0, 0))$ first. Defining the quadratic functional 
\begin{equation}\label{eq:T_op}
	\begin{array}{c}
	\displaystyle	T\colon \Set{ \Fhat_0(\ell, \mbold)=0,\  \text{for}\ \ell=0, \ell=1,\ \mbold\in \QuantumN(\ell)} \to\mathbb R, \\ \displaystyle T(f_0):=Q((f_0, 0), (f_0, 0))- \frac{36}{85}\frac{1}{8\pi}\norm{(f_0, 0)}_{\mathcal{H}}^2,
	\end{array}
\end{equation}
it will suffice to show that $T$ satisfies the conditions of Lemma~\ref{lem:tridiag_dominance}; notice that the orthogonality $(f_0, 0)\widetilde{\bot}T_\fboldstar\Mrom$ is encoded in the domain of $T$. We perform the change of variable 
\begin{equation}\label{eq:FromFtoG}
	\Fhat_0(\ell,\mbold)=\frac{\Hhat(\ell,\mbold)}{\sqrt{(\ell+1)(\ell+3)}},
\end{equation}
so that, by~\eqref{eq:Quad_Fived_zero} and by the formula~\eqref{eq:Hone_stereographic} in the appendix, we have
\begin{equation}\label{eq:PrepareTridiagonalApplication}
	T(H)=\sum_{\ell=2}^\infty\sum_{\mbold\in\QuantumN(\ell)} a_{\ell,\mbold} \Hhat(\ell,\mbold)^2 +b_{\ell,\mbold}\Hhat(\ell,\mbold)\Hhat(\ell+1,\mbold),
\end{equation}
where
\begin{equation}\label{eq:a_and_b_of_tridiagonal_application}
	\begin{array}{rcl}
		a_{\ell, \mbold} &=& \frac1{4\pi}\frac{\ell^4+8\ell^3+11\ell^2-20\ell -12 +6m_1^2+18m_1}{  (\ell+1)^2(\ell+3)^2} -  \frac{36}{85}\frac{1}{8\pi}\frac{ (\ell+2)^2}{(\ell+1)(\ell+3)}, \\ 
		b_{\ell,\mbold} &=& \sqrt{ \frac{ (\ell+1-m_1)(\ell+4+m_1)}{(\ell+1)(\ell+4)}}\Round{\frac1{4\pi}\frac{(\ell-1)(\ell+6)}{(\ell+2)(\ell+3)} - \frac{36}{85}\frac{1}{8\pi}}.
	\end{array}
\end{equation}
Notice that $b_{\ell, \obold}$ is a rational function: the change of variable \eqref{eq:FromFtoG} was chosen to obtain this. Note also that, for all $\ell\ge 2$, $a_{\ell,\mbold}\ge a_{\ell,\obold}$ and $b_{\ell,\mbold}\le b_{\ell,\obold}$. Therefore
\begin{equation}\label{eq:Qfive_minus_C_diag_dominant_stepone}
	a_{2,\mbold}-\frac12 b_{2,\mbold} \ge a_{2,\obold}-\frac12 b_{2, \obold} = 0,
\end{equation}
while, for $\ell>2$, 
\begin{equation}\label{eq:Qfive_minus_C_diag_dominant_steptwo}
	\begin{split}
	a_{\ell,\mbold}-\frac12  \Round{ b_{\ell,\mbold}+  b_{\ell-1,\mbold}}  &\ge a_{\ell,\obold}-\frac12\Round{ b_{\ell,\obold}+b_{\ell-1,\obold}} \\
	&= \frac1{4\pi(\ell+1)(\ell+3)} \left(\frac{\ell^2+4\ell+15}{(\ell+1)(\ell+3)}-\frac{18}{85}\right) >0.
\end{split}
\end{equation}
So the conditions \eqref{eq:tridiagonal_dominance_condition} of Lemma \ref{lem:tridiag_dominance} are satisfied, and we can conclude that 
 \begin{equation}\label{eq:Qgzero_positive}
 	\begin{array}{cc}
  	\displaystyle Q((f_0,0), (f_0,0))\ge \frac{36}{85}\frac{1}{8\pi} \norm{(f_0, 0)}_{\mathcal{H}}^2, & \text{for all }(f_0, 0)\widetilde{\bot} T_\fboldstar\Mrom.
	\end{array}
\end{equation}

To prove the analogous inequality for $Q((0, g_1), (0, g_1))$ we let 
  \begin{equation}\label{eq:PrepTridOne}
  	\begin{array}{c}
	\displaystyle T\colon \Set{ \Fhat_1(\ell, \obold) = 0,\  \text{for}\  \ell=0,\ \ell=1} \to \mathbb R \\ \displaystyle 
	T(f_1):= Q((0, f_1), (0, f_1)) - \frac{36}{85}\frac1{8\pi} \norm{(0, f_1)}_\mathcal{H}^2 ,
	\end{array}
\end{equation}
We perform the change of variable 
  \begin{equation}\label{eq:FromGtoH}
  	\Fhat_1(\ell, \mbold)=\frac{\Hhat(\ell, \mbold)(\ell+2)}{\sqrt{ (\ell+1)(\ell+3)}},
\end{equation} 
so that, by~\eqref{eq:Quad_Fived_one} and by the formula~\eqref{eq:Ltwo_stereographic} in the appendix, 
\begin{equation}\label{eq:T_Op_One}
	\begin{split}
	T(H)&=\sum_{\mbold \in \QuantumN(1), m_1=1} \tilde{a}_{1, \mbold} \Hhat(1, \mbold)^2 + \tilde{b}_{1, \mbold}\Hhat(1, \mbold)\Hhat(2, \mbold) \\
		&+\sum_{\ell=2}^\infty\sum_{\mbold\in\QuantumN(\ell)} a_{\ell,\mbold} \Hhat(\ell,\mbold)^2 +b_{\ell,\mbold}\Hhat(\ell,\mbold)\Hhat(\ell+1,\mbold),
	\end{split}
\end{equation}
%
%
where $\tilde{a}_{1, \mbold}=\frac {93}{2720\pi} , \tilde{b}_{1, \mbold}= -\frac{36}{85}\frac{1}{8\pi}\sqrt{\frac35}$, while $a_{\ell, \mbold}$ and $b_{\ell, \mbold}$ equal \eqref{eq:a_and_b_of_tridiagonal_application} for $\ell\ge 2$. For $\ell=1, 2$ and $m_1=1$ we have that 
\begin{equation}\label{eq:a_one_check}
	\begin{array}{rcl}
	\tilde{a}_{1, \mbold}-\frac12 \lvert \tilde{b}_{1, \mbold}\rvert & =& \frac{1}{\pi}\Round{{\frac {93}{2720}} -\frac{36}{85}\frac{1}{8}\sqrt{\frac35}}>\frac1{100} >0,  \\
	a_{2, \mbold} - \frac12 \Round{ b_{2, \mbold} +\lvert\tilde{b}_{1, \mbold}\rvert} &=&\frac1\pi\left( {\frac {64}{1275}}-{\frac {2\sqrt{7}}{255}} -{\frac {9\sqrt{15}}{1700
}}\right)  >\frac2{1000}>0.
	\end{array}
\end{equation}
For all other values of $\ell$ and $\mbold$, the assumptions of Lemma \ref{lem:tridiag_dominance} have already been verified; see~\eqref{eq:Qfive_minus_C_diag_dominant_stepone} for the $\ell=2, m_1=2$ case (recall that, by convention, $b_{1, \mbold}=0$ if $m_1>1$), and~\eqref{eq:Qfive_minus_C_diag_dominant_steptwo} for all the other cases. We conclude that
\begin{equation}\label{eq:Qgone_positive}
 	\begin{array}{cc}
  	\displaystyle Q((0,f_1), (0,f_1))\ge \frac{36}{85}\frac{1}{8\pi} \norm{(0, f_1)}_{\mathcal{H}}^2, & \text{for all }(0, f_1)\widetilde{\bot} T_\fboldstar\Mrom,
	\end{array}
\end{equation}
which completes the proof.
\end{proof}

\begin{remark}\label{rem:final}
    The same proof shows that $C=\frac{36}{85}\frac{1}{8\pi}$ is the largest constant such that the quadratic form $Q(\fbold, \fbold)-C\norm{\fbold}_{\mathcal H}^2$ is diagonally dominant in the sense of Lemma~\ref{lem:tridiag_dominance}. This is the reason why the constant $\frac{36}{85}\frac{1}{8\pi}$ appears in Theorem~\ref{thm:main_local}.
\end{remark}

\appendix

\section{Computations with spherical harmonics}\label{sec:assoc_legendre}
\setcounter{equation}{0}
\setcounter{theorem}{0}
\renewcommand{\theequation}{A.\arabic{equation}}
\renewcommand{\thetheorem}{A.\arabic{theorem}}
In this appendix we compute expressions for the scalar product $\Braket{\fbold|\gbold}_{\mathcal H}$ and the quadratic form $Q(\fbold, \fbold)$ in terms of the Penrose transforms $(F_0, F_1)$ and $(G_0, G_1)$ of $\fbold$ and $\gbold$ respectively (see Definition~\ref{def:penrose}). 

Following \cite[pp.\,54]{Muller98}, we introduce the \emph{normalized associated Legendre functions} of  degree $\ell\in \N_{\ge 0}$, order $m\in \Set{0, 1, \ldots, \ell}$ and dimension $6$ to be the functions
\footnote{In terms of Gegenbauer polynomials, given via the generating function $\sum_{\ell=0}^\infty C_\ell^{(\nu)}(t)r^\ell = (1-2rt+r^2)^{-\nu}$, it holds that $P_\ell^m(6; t)=\mathcal{N}_{\ell, m}(1-t^2)^\frac{m}2 C_{\ell-m}^{(m+2)}(t)/C_{\ell-m}^{(m+2)}(1)$.}
\begin{equation}\label{eq:associated_legendre}
	\begin{array}{cc}
	P_\ell^m(6 ; t)=\mathcal{N}_{\ell, m}(1-t^2)^\frac m 2 P_{\ell-m}(2m+6;t), & t\in[-1,1],
	\end{array}
\end{equation}
where $P_{\ell-m}(2m+6;\cdot)$ is the \emph{Legendre polynomial} of degree $\ell-m$ in dimension $2m+6$.   The normalization constant (recall $\lvert\mathbb S^n\rvert=2\pi^\frac{n+1}{2}/ \Gamma(\frac{n+1}{2})$)
\begin{equation}\label{eq:norm_cnst}
	\mathcal{N}_{\ell, m}=\sqrt{\frac{(2\ell+4)(\ell+m+3)!}{(\ell-m)!(2m+4)!} \frac{\abs{ \SSS^{2m+4} } }{ \abs{\SSS^{2m+5}} } }
\end{equation} 
is chosen to ensure the orthonormality
\begin{equation}\label{eq:NormAssoLegendre}
	\int_{-1}^1 P_\ell^m(6; t)P^m_{\ell'}(6; t) (1-t^2)^\frac{3}{2}\, dt = \delta_{\ell,\ell'}.
\end{equation}
We adopt the convention that $P_\ell^m(6; \cdot)=0$ if $m>\ell$. 

Recall from Definition~\ref{def:sph_harm} that \begin{equation}\label{eq:Yell_Recall}
    Y_{\ell, \mbold}(X_0, \vec X)=P_\ell^{m_1}(6; X_0)Y_{\mbold}^{\mathbb S^4}\left(\frac{\vec{X}}{\lvert \vec X\rvert}\right), 
\end{equation}
where $\{Y_{\mbold}^{\mathbb S^4}\}$ is a fixed orthonormal system of spherical harmonics  on $\mathbb S^4$ of degree $m_1$; here
\begin{equation}\label{eq:sph_harm_recall}
    \begin{array}{cc}
	    \ell\in\mathbb N_{\ge 0}, &\mbold\in\QuantumN(\ell):=\{ \mbold=(m_1, m_2, m_3, m_4)\in \Z^4\ :\ \ell \ge m_1\ge m_2 \ge m_3 \ge |m_4|\}.
    \end{array}
\end{equation}
Note that $\QuantumN(\ell)\subset\QuantumN(\ell+1)$. We now introduce the following coefficient, defined for $\ell, m_1\in \Z$:
	\begin{equation}\label{eq:SFact}
		\mathrm{C}_5(\ell, m_1)=
		\begin{cases} 
		    \frac12\sqrt{ \frac{ (\ell-m_1+1)(\ell+m_1+4) }{(\ell+2)(\ell+3)}} , & 0\le m_1\le \ell, \\ 
		    0 &\text{otherwise}.
		\end{cases}
	\end{equation}
This appears in the next lemma.
\begin{lemma}\label{lem:IntSphericalProduct}
	For all $\ell, \ell'\in \N_{\ge 0}$ and all $\mbold\in \QuantumN(\ell), \mbold'\in \QuantumN(\ell')$, 
	\begin{equation}\label{eq:integral_product}
	\int_{\SSS^5} X_0 Y_{\ell, \mbold}(X) Y_{\ell',\mbold'}(X)\, d\sigma = 
	\begin{cases}  
	    \mathrm{C}_5(\min(\ell, \ell'), m_1), & \lvert \ell'-\ell\rvert=1 \text{ and } \mbold=\mbold',\\ 
	    0, & \text{otherwise}.
	\end{cases}
\end{equation}
\end{lemma}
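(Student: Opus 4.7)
The plan is to reduce this surface integral on $\SSS^5$ to a one-dimensional weighted integral in $t = X_0$, and then exploit the fact that multiplication by $t$ acts as a tridiagonal operator on the family $\{P_\ell^{m_1}(6;\cdot)\}_{\ell \ge m_1}$, via a three-term recurrence inherited from Gegenbauer polynomials. First I would parametrize $X\in\SSS^5$ as $(t, \sqrt{1-t^2}\,\omega)$ with $t\in[-1,1]$ and $\omega\in\SSS^4$, so that $d\sigma = (1-t^2)^{3/2}\,dt\,d\omega$. Substituting the product form \eqref{eq:Yell_Recall} and invoking the $L^2(\SSS^4)$-orthonormality of $\{Y_\mbold^{\SSS^4}\}$, together with the automatic orthogonality of spherical harmonics of different degrees on $\SSS^4$ (which kills cross terms whenever $m_1 \ne m_1'$), the angular factor collapses to $\delta_{\mbold, \mbold'}$.

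This leaves the one-dimensional integral $\int_{-1}^1 t\, P_\ell^{m_1}(6;t)\, P_{\ell'}^{m_1}(6;t)\, (1-t^2)^{3/2}\,dt$, which I would evaluate by first proving the three-term recurrence
\begin{equation*}
	t\, P_\ell^{m_1}(6;t) = \mathrm{C}_5(\ell, m_1)\, P_{\ell+1}^{m_1}(6;t) + \mathrm{C}_5(\ell-1, m_1)\, P_{\ell-1}^{m_1}(6;t),
\end{equation*}
with the convention $P_{\ell-1}^{m_1}=0$ if $\ell-1<m_1$. Combined with the orthonormality relation \eqref{eq:NormAssoLegendre}, this recurrence immediately yields the vanishing for $|\ell'-\ell|\ge 2$ as well as the explicit values for $\ell' = \ell\pm 1$; indeed, both $\mathrm{C}_5(\ell, m_1)$ and $\mathrm{C}_5(\ell-1, m_1)$ coincide with the stated $\mathrm{C}_5(\min(\ell,\ell'), m_1)$ in the respective cases.

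The recurrence itself is derived from the classical Gegenbauer identity $(n+1)\,C_{n+1}^\lambda(t) = 2(n+\lambda)\, t\, C_n^\lambda(t) - (n+2\lambda-1)\,C_{n-1}^\lambda(t)$, specialised to $\lambda = m_1+2$ and $n = \ell - m_1$. The first step is to pass to the normalized quantities $C_n^\lambda(t)/C_n^\lambda(1)$ appearing in the footnoted expression for $P_\ell^m(6;t)$, using $C_n^\lambda(1) = \binom{n+2\lambda-1}{n}$ to reduce the ratios at $t=1$ to simple factorial fractions; the second step is to reinstate the factor $\mathcal{N}_{\ell, m_1}(1-t^2)^{m_1/2}$ from \eqref{eq:associated_legendre}. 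The main obstacle is the bookkeeping of the ratios $\mathcal{N}_{\ell, m_1}/\mathcal{N}_{\ell\pm 1, m_1}$: the square roots must combine with the rational coefficients coming from the Gegenbauer recurrence to collapse precisely into the symmetric expression $\mathrm{C}_5(\ell, m_1)$. A useful sanity check is that the tridiagonal matrix representing multiplication by $t$ in the orthonormal basis $\{P_\ell^{m_1}(6;\cdot)\}_{\ell}$ must be symmetric, i.e.\ the coefficient of $P_{\ell-1}^{m_1}$ in the expansion of $t\, P_\ell^{m_1}$ must agree with the coefficient of $P_\ell^{m_1}$ in the expansion of $t\, P_{\ell-1}^{m_1}$, which is exactly what the formula $\mathrm{C}_5(\ell-1, m_1)$ asserts on both sides.
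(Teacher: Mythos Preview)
Your proposal is correct and follows essentially the same route as the paper: reduce to a one-dimensional integral via the product form~\eqref{eq:Yell_Recall} and orthonormality on $\SSS^4$, then use a three-term recurrence for $tP_\ell^{m_1}(6;t)$ together with~\eqref{eq:NormAssoLegendre}. The only cosmetic difference is that the paper quotes the recurrence~\eqref{eq:associated_legendre_recurrence} directly from M\"uller~\cite{Muller98}, whereas you rederive it from the Gegenbauer identity and the normalization constants $\mathcal{N}_{\ell,m_1}$; the two recurrences are the same up to an index shift, and your symmetry check (that the tridiagonal matrix of multiplication by $t$ is symmetric in an orthonormal basis) is a convenient way to bypass some of the bookkeeping.
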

\begin{proof}
Letting $X_0=\cos R$, we have that $d\sigma=(\sin R)^4\, dR\, d\sigma_{\mathbb S^4}$; thus, 
\begin{equation}\label{eq:towards_m_is_mprime}
    \begin{split}
        \int_{\mathbb S^5}X_0 Y_{\ell, \mbold} Y_{\ell', \mbold'}\, d\sigma &= \int_{\mathbb S^4} Y_{\mbold}^{\mathbb S^4} Y_{\mbold'}^{\mathbb S^4}\, d\sigma_{\mathbb S^4}\int_0^\pi \cos(R)\, P_{\ell}^{m_1}(6; \cos R)\,P_{\ell'}^{m_1'}(6;\cos R)(\sin R)^4\, dR \\ 
        &=\delta_{\mbold, \mbold'}\int_{-1}^1 X_0P_\ell^{m_1}(6;X_0)P_{\ell'}^{m_1}(6;X_0)(1-X_0^2)^\frac32\, dX_0.
    \end{split}
\end{equation}
To evaluate the latter integral, we first assume without loss of generality $\ell'\ge \ell$.  From the recursion relation for the Legendre polynomials (\cite[Lemma 3, pg.\,39]{Muller98}) we obtain
\begin{equation}\label{eq:associated_legendre_recurrence}
		0=a_\ell^{m_1}P_\ell^{m_1}(6; X_0) - b_\ell^{m_1}X_0P_{\ell-1}^{m_1}(6;X_0) +c_\ell^{ m_1} P_{\ell-2}^{m_1}(6;X_0),
\end{equation}
with 
\begin{equation}\label{eq:recurrence_coeffs}
	\begin{array}{ccc}
		a_\ell^{m_1}= \sqrt{ \frac{(\ell-m_1)(\ell+m_1+3)}{(2\ell+4)(\ell+m_1+2)}}, &
		b_\ell^{m_1} = \sqrt{ \frac{2\ell+2}{\ell+m_1+2} }, &
		c_\ell^{m_1} = \sqrt{ \frac{\ell-m_1-1}{2\ell}}.
	\end{array}
\end{equation}
Multiplying~\eqref{eq:associated_legendre_recurrence} by $P_{\ell'-1}^{m_1}(6; X_0)(1-X_0^2)^\frac{3}{2}$ and then integrating, we infer from \eqref{eq:NormAssoLegendre} that, since $\ell'\ge \ell$,
\begin{equation}\label{eq:spherical_factor}
	\int_{-1}^1 P_{\ell-1}^{m_1}(6; X_0)P_{\ell'-1}^{m_1}(6; X_0)X_0(1-X_0^2)^{\frac{3}{2}}\,dX_0=\frac{a_\ell^{ m_1}}{b_\ell^{m_1}}\delta_{\ell, \ell'-1}=\mathrm{C}_5(\ell-1, m_1)\delta_{\ell, \ell'-1}.
\end{equation}
This completes the proof.
\end{proof}
In order to compute a convenient expression for $\Braket{\fbold|\gbold}_{\mathcal H}$, where as usual $\fbold=(f_0, f_1)$ and $\gbold=(g_0, g_1)$, we start by rewriting it in terms of the fractional Laplacian, as follows:  
\begin{equation}\label{eq:Hcaldotone_scalprod}
	\Braket{\fbold| \gbold}_{\mathcal{H}} = \int_{\R^5}\sqrt{-\Delta} f_0 \sqrt{-\Delta} g_0\, dx + \int_{\R^5} f_1 g_1\, dx .
\end{equation}
We identify $\fbold$ to $(F_0, F_1)$ and $\gbold$ to $(G_0, G_1)$ via the Penrose transform of Definition~\ref{def:penrose}, and we recall the fractional Laplacian formula (see, e.~g.,~\cite[Lemma~A.3]{NeOSStTa23})
\begin{equation}\label{eq:morpurgo}    
    \sqrt{-\Delta}f_0(x)=(1+X_0)^3\sqrt{4-\Delta_{\mathbb S^5}}(F_0)(X), 
\end{equation}
where $x$ and $X=(X_0, \vec{X})$ are related by the stereographic projection $x=\vec{X}/(1+X_0)$, as in Definition~\ref{def:penrose}. Obviously, the same formula holds for for $g_0$ and $G_0$. 

Recalling the Jacobian $dx=(1+X_0)^{-5}d\sigma$, we have that   
\begin{equation}\label{eq:stereographic_projection_aftermath}
	\Braket{\fbold| \gbold}_{\mathcal{H}} = \int_{\SSS^5} \sqrt{4-\Delta_{\mathbb S^5}}F_0\ \sqrt{4-\Delta_{\mathbb S^5}}G_0\ (1+X_0)\, d\sigma + \int_{\SSS^5} F_1G_1(1+X_0) d\sigma.
\end{equation}
We now use Lemma \ref{lem:IntSphericalProduct} to compute 
\begin{equation} \label{eq:Ltwo_stereographic}
    \begin{split}
		\int_{\SSS^5} F_1G_1(1+X_0) d\sigma &= 
		\sum_{\ell=0}^\infty\sum_{\mbold\in \QuantumN(\ell)}\Fhat_1(\ell, \mbold)\Ghat_1(\ell, \mbold) \\   &+\mathrm{C}_5(\ell, m_1) \Round{ \Fhat_1(\ell,\mbold)\Ghat_1(\ell+1, \mbold) + \Fhat_1(\ell+1, \mbold)\Ghat_1(\ell, \mbold)}.
	\end{split}
\end{equation}
Since $-\Delta_{\mathbb S^5}Y_{\ell, \mbold}=\ell(\ell+4)Y_{\ell, \mbold}$, we have that $\sqrt{4-\Delta_{\mathbb S^5}}Y_{\ell, \mbold}=(\ell+2)Y_{\ell,\mbold}$. Thus, $\int_{\SSS^5} \sqrt{4-\Delta_{\mathbb S^5}}F_0\ \sqrt{4-\Delta_{\mathbb S^5}}G_0\ (1+X_0)\, d\sigma$ is equal to
\begin{equation}\label{eq:Hone_stereographic}
    \begin{split}
	\sum_{\ell\ge 0}&\sum_{\mbold\in \QuantumN(\ell)} \Round{\ell+2}^2\Fhat_0(\ell, \mbold)\Ghat_0(\ell, \mbold)\\ &+\mathrm{C}_5(\ell, m_1)\Round{\ell+2}\Round{\ell+3}\Round{ \Fhat_0(\ell,\mbold)\Ghat_0(\ell+1, \mbold) + \Fhat_0(\ell+1, \mbold)\Ghat_0(\ell, \mbold)}. 
	\end{split}
\end{equation}
\begin{remark}\label{rem:cumbersome_orthogonality}
    These formulas show that $\Braket{\cdot|\cdot}_\mathcal{H}$ is not diagonal in $\Fhat_0(\ell, \mbold)$ and $\Fhat_1(\ell, \mbold)$. This is the reason why the orthogonality $\fbold\bot T_\fboldstar\Mrom$ is difficult to characterize in terms of these coefficients.
\end{remark}
As an application, we now compute $\lVert\fboldstar\rVert_{\mathcal H}^2$. The Penrose transform of $\fboldstar$ is the pair of constant functions $(F_{\!\star 0}, F_{\!\star 1})=(1, 0)$, so $\Fhat_{\!\star 0}(0, \obold)=\sqrt{\abs{\mathbb S^5}}$ and $\Fhat_{\!\star 0}(\ell, \mbold)=0$ for all $\ell>0$. We conclude from~\eqref{eq:Hone_stereographic} that
\begin{equation}\label{eq:norm_of_maximizer}
    \lVert\fboldstar\rVert_{\mathcal H}^2=4\abs{\mathbb S^5}=4\pi^3.
\end{equation}
\subsection{The quadratic form}\label{sec:appendix_quadforms}
In this subsection, we evaluate an expression for the quadratic form $Q(\fbold, \fbold)$; recall~\eqref{eq:general_quadratic_form_fived}. We need to compute it for $\fbold\widetilde{\bot}T_{\fboldstar}\Mrom$, that is, 
\begin{equation}\label{eq:FboldOrthoAppendix}
    \begin{array}{ccc}
        \Fhat_0(\ell, \mbold)=0, & \Fhat_1(\ell, \obold)=0, & \text{ for }\ell=0, 1\text{ and }\mbold\in \QuantumN(\ell), 
    \end{array}
\end{equation}
which, by~\eqref{eq:Hone_stereographic}, immediately imply $\Braket{\fboldstar| \fbold}_{\mathcal H}=0$. Thus,  
\begin{equation}\label{eq:Q_appendix}
	\begin{split}
		Q((f_0, 0), (f_0, 0))&= \frac{1}{4\pi}\norm{(f_0,0)}_{\mathcal H}^2
		 -\frac{12}{\pi^2} \int_{\R^{1+5}} \Round{ S\fboldstar}^2\Round{S(f_0,0) }^2,\\
		 Q((0, f_1), (0, f_1))&= \frac{1}{4\pi}\norm{(0, f_1)}_{\mathcal H}^2
		 -\frac{12}{\pi^2} \int_{\R^{1+5}} \Round{ S\fboldstar}^2\Round{S(0, f_1) }^2.
	\end{split}
\end{equation}
%

To evaluate the latter integrals, we first note that, by~\cite[Corollary~3.7]{Ne18},
\begin{equation}\label{eq:CorollarySpacetimeIntegration}
    \begin{split}
        &\int_{\mathbb R^{1+5}}(S\fboldstar)^2(S\fbold)^2 =\\ & \frac12 \int_{\mathbb S^1\times \mathbb S^5}\Big[ \cos(2T)\Big( \cos(\sqrt{4-\Delta_{\mathbb S^5}}T) F_0
        +\frac{\sin({\sqrt{4-\Delta_{\mathbb S^5}}}T)}{\sqrt{4-\Delta_{\mathbb S^5}}}F_1\Big)\Big]^2(\cos T+X_0)^2\, dT d\sigma;
    \end{split}
\end{equation}
moreover, by Lemma~\ref{lem:IntSphericalProduct} (recall the convention $Y_{\ell-1, \mbold}=0$ if $\ell-1<0$ or $\ell-1 < m_1$),
\begin{equation}\label{eq:OmegaYlj}
	(\cos T+X_0) Y_{\ell, \mbold} = \cos (T )Y_{\ell, \mbold} +\mathrm{C}_5(\ell-1, m_1)Y_{\ell-1, \mbold} + \mathrm{C}_5(\ell, m_1)Y_{\ell+1, \mbold}.
\end{equation}  

Let $A_\ell(T):=\cos(2T)\cos((\ell+2)T)$. We have that
\begin{equation}\label{eq:Mauvais_Term_first_step}
    \begin{split}
	    &\frac{12}{\pi^2} \int_{\R^{1+5}} \Round{ S\fboldstar}^2\Round{S(f_0, 0)}^2 = \frac{6}{\pi^2}\int_{\SSS^1\times \SSS^5} \Big[\sum_{\ell, \mbold}A_\ell(T)\Fhat_0(\ell,\mbold)(\cos T+ X_0)Y_{\ell, \mbold}(X)\Big]^2\, dT d\sigma \\ 
	    &
	        \begin{split}
	            =\frac{6}{\pi^2} \int_{\SSS^1\times \SSS^5}\Big[
	        \sum_{\ell, \mbold} \Big(
	            A_\ell(T)\cos(T) \Fhat_0(\ell, \mbold) &+ A_{\ell-1}(T) \mathrm{C}_5(\ell-1, m_1) \Fhat_0(\ell-1, \mbold) \\&+ A_{\ell+1}(T)\mathrm{C}_5(\ell, \mbold) \Fhat_0(\ell+1, \mbold)
	            \Big)
	       Y_{\ell, \mbold}\Big]^2\, dTd\sigma 
	       \end{split} \\
	       &\begin{split}
	        =\frac{3}{8\pi^2} \sum_{\ell, \mbold} \int_{-\pi}^\pi\Big[ &\big(\cos((\ell-1)T)+\cos((\ell+3)T)\big) \big( \Fhat_0(\ell, \mbold) +2\mathrm{C}_5(\ell-1, m_1)\Fhat_0(\ell-1, \mbold)\big) \\
	        &+\big(\cos((\ell+1)T)+\cos((\ell+5)T)\big) \big( \Fhat_0(\ell, \mbold) +2\mathrm{C}_5(\ell, m_1)\Fhat_0(\ell+1, \mbold)\big) \Big]^2\, dT.
	        \end{split}
    \end{split}
\end{equation}
By the orthogonality~\eqref{eq:FboldOrthoAppendix}, the sum runs on $\ell\ge 1$, so the four cosines in the latter integral are orthogonal on $[-\pi, \pi]$. Using this we evaluate the integral, and rearrange terms, to conclude that $\frac{12}{\pi^2} \int_{\R^{1+5}} \Round{ S\fboldstar}^2\Round{S(f_0, 0)}^2$ equals
\begin{equation}\label{eq:SpacetimeIntegralFzero}
    \begin{split}
    &\begin{split}\frac{3}{8\pi}\sum_{\overset{\ell\ge 2}{\mbold\in\QuantumN(\ell)}}\Big[&(4+8\mathrm{C}_5(\ell, m_1)^2+8\mathrm{C}_5(\ell-1, m_1)^2)\Fhat_0(\ell, \mbold)^2+ \\ 
        &16\mathrm{C}_5(\ell, m_1)\Fhat_0(\ell, \mbold)\Fhat_0(\ell+1, \mbold)\Big]= 
        \end{split}\\ 
     &\frac{3}{\pi}\sum_{\overset{\ell\ge 2}{\mbold\in\QuantumN(\ell)}}\left[ \frac{2\ell^2\!+\!8\ell\!-\!m_1^2\!-\!3m_1\!+\!4}{2(\ell+1)(\ell+3)}\Fhat_0(\ell,\mbold)^2+2\mathrm{C}_5(\ell, m_1)\Fhat_0(\ell, \mbold)\Fhat_0(\ell+1,\mbold)\right].
    \end{split}
\end{equation}
Inserting this, and~\eqref{eq:Hone_stereographic}, into~\eqref{eq:Q_appendix} yields the formula~\eqref{eq:Quad_Fived_zero} of the main text. 
%
%

To compute the other term, it is convenient to let 
\begin{equation}\label{eq:Ghat_Appendix} 
    \Ghat_1(\ell, \mbold):=\frac{\Fhat_1(\ell, \mbold)}{\ell +2}. 
\end{equation}
Arguing as before, we see that $\frac{12}{\pi^2} \int_{\mathbb R^{1+5}}(S\fboldstar)^2(S(0, f_1))^2 $ equals
\begin{equation}\label{eq:MauvaisTermFone}
    \begin{split}
        &\frac{6}{\pi^2}\int_{\SSS^1\times \SSS^5} \Big[\sum_{\ell, \mbold}\cos(2T) \sin((\ell+2)T)\Ghat_1(\ell,\mbold)(\cos T+ X_0)Y_{\ell, \mbold}(X)\Big]^2\, dT d\sigma =\\ 
        &\begin{split}
	        \frac{3}{8\pi^2} \sum_{\ell, \mbold} \int_{-\pi}^\pi&\Big[ (\sin((\ell-1)T)+\sin((\ell+3)T)) \big( \Ghat_1(\ell, \mbold) +2\mathrm{C}_5(\ell-1, m_1)\Ghat_1(\ell-1, \mbold)\big) \\
	         &+(\sin((\ell+1)T)+\sin((\ell+5)T)) \big( \Ghat_1(\ell, \mbold) +2\mathrm{C}_5(\ell, m_1)\Ghat_1(\ell+1, \mbold)\big) \Big]^2\, dT=
	        \end{split}\\
	        &
	        \begin{split}
	            &\frac{3}{\pi}\sum_{\overset{\ell\ge 2}{\mbold\in\QuantumN(\ell)}}\left[ \frac{2\ell^2\!+\!8\ell\!-\!m_1^2\!-\!3m_1\!+\!4}{2(\ell+1)(\ell+3)}\Ghat_1(\ell,\mbold)^2+2\mathrm{C}_5(\ell, m_1)\Ghat_1(\ell, \mbold)\Ghat_1(\ell+1,\mbold)\right] \\
	            &+\frac{3}{\pi}\sum_{\obold\ne\mbold\in\QuantumN(1)}\left[\frac12\Ghat_1(1, \mbold)^2 + 2 \mathrm{C}_5(1, 1)\Ghat_1(1, \mbold)\Ghat_1(2, \mbold)\right].
	        \end{split}
    \end{split}
\end{equation}
This is very similar to the right-hand side of~\eqref{eq:SpacetimeIntegralFzero}, however it has extra summands in $\ell=1$ and $\obold\ne\mbold\in\QuantumN(1)$, due to the orthogonality~\eqref{eq:FboldOrthoAppendix}; indeed, notice that the term $\Fhat_1(1, \mbold)=(\ell+2)\Ghat_1(\ell, \mbold)$ needs not vanish. Inserting this, together with the formula~\eqref{eq:Ltwo_stereographic}, into~\eqref{eq:Q_appendix} finally yields the formula~\eqref{eq:Quad_Fived_one} of the main text.


\begin{thebibliography}{03}

\bibitem{BaGe99}
H.~Bahouri and P.~G\'erard, 
\newblock{\it High frequency approximation of solutions to critical nonlinear wave equations.}
\newblock Amer.~J.~Math. \textbf{121} (1999) , no.~1, 131-175.

\bibitem{BezRogers13}
N.~Bez and K.~Rogers, 
\newblock{\it A sharp Strichartz estimate for the wave equation with data in the energy space.}
\newblock J.~Eur.~Math.~Soc. (JEMS) \textbf{15} (2013), no.~3, 805-823.

\bibitem{BiEg91}
	G.~Bianchi and H.~Egnell,
	\newblock{\it A note on the Sobolev inequality.}
	\newblock{J.~Funct.~Anal. \textbf{100} (1991), no.~1, 18-24.}

\bibitem{DuMeRo11}
    T.~Duyckaerts, F.~Merle and S.~Roudenko, 
    \newblock{\it Maximizers for the Strichartz norm for small solutions of mass-critical NLS.}
    \newblock{Ann. Sc. Norm. Super. Pisa Cl. Sci. (5) \textbf{10} (2011), no. 2, 427–476}.

\bibitem{Foschi} D.~Foschi, 
    \newblock{\it Maximizers for the Strichartz inequality.}
    \newblock{ J.~Eur.~Math.~Soc. (JEMS) \textbf{9} (2007), no.~4, 739-774.}
    
\bibitem{Muller98} C.~M\"uller, 
    \newblock{\it Analysis of spherical symmetries in Euclidean spaces.} 
	\newblock{Appl. Math. Sci., 129
\textit{Springer-Verlag, New York}, 1998. viii+223 pp.}
	
        
\bibitem{Ne18} G.~Negro, 
        \newblock{\it A sharpened Strichartz inequality for the wave equation.}
        \newblock{Ann. Sci. Éc. Norm. Supér., to appear (preprint at \href{https://arxiv.org/abs/1802.04114}{arXiv:1802.04114}). }

\bibitem{NeOSStTa23} G.~Negro, D.~Oliveira e Silva, B.~Stovall and J.~Tautges,
    \newblock{\it Exponentials rarely maximize Fourier extension inequalities for cones.}
    \newblock{\href{https://arxiv.org/abs/2302.00356}{arXiv:2302.00356}.}
        
\end{thebibliography}
\end{document}